\newtheorem{theorem}{Theorem}[section]
\newtheorem{proposition}[theorem]{Proposition}
\newtheorem{corollary}[theorem]{Corollary}
\theoremstyle{definition}
\newtheorem{example}[theorem]{Example}
\newtheorem{remark}[theorem]{Remark}
\newtheorem{definition}[theorem]{Definition}
\mathchardef\za="710B  
\mathchardef\zb="710C  
\mathchardef\zg="710D  
\mathchardef\zd="710E  
\mathchardef\zve="710F 
\mathchardef\zz="7110  
\mathchardef\zh="7111  
\mathchardef\zvy="7112 
\mathchardef\zi="7113  
\mathchardef\zk="7114  
\mathchardef\zl="7115  
\mathchardef\zm="7116  
\mathchardef\zn="7117  
\mathchardef\zx="7118  
\mathchardef\zp="7119  
\mathchardef\zr="711A  
\mathchardef\zs="711B  
\mathchardef\zt="711C  
\mathchardef\zu="711D  
\mathchardef\zvf="711E 
\mathchardef\zq="711F  
\mathchardef\zc="7120  
\mathchardef\zw="7121  
\mathchardef\ze="7122  
\mathchardef\zy="7123  
\mathchardef\zf="7124  
\mathchardef\zvr="7125 
\mathchardef\zvs="7126 
\mathchardef\zf="7127  
\mathchardef\zG="7000  
\mathchardef\zD="7001  
\mathchardef\zY="7002  
\mathchardef\zL="7003  
\mathchardef\zX="7004  
\mathchardef\zP="7005  
\mathchardef\zS="7006  
\mathchardef\zU="7007  
\mathchardef\zF="7008  
\mathchardef\zW="700A  
\newcommand{\be}{\begin{equation}}
\newcommand{\ee}{\end{equation}}
\newcommand{\bea}{\begin{eqnarray}}
\newcommand{\eea}{\end{eqnarray}}
\newcommand{\beas}{\begin{eqnarray*}}
\newcommand{\eeas}{\end{eqnarray*}}
\def\*{{\textstyle *}}
\newcommand{\T}{{\mathbb T}}
\newcommand{\we}{\wedge}
\newcommand{\Z}{\mathbb{Z}}
\newcommand{\R}{\mathbb{R}}
\newcommand{\C}{\mathbb{C}}
\newcommand{\Q}{\mathbb{Q}}
\newcommand{\pa}{\partial}
\newcommand{\ti}{\times}
\newcommand{\Ll}{{\pounds}}
\def\tU{{\widetilde{U}}}
\def\tM{\widetilde{M}}
\def\tw{\widetilde{\zw}}
\def\tR{{{\cR_\tU}}}
\def\th{{\zh_\tU}}
\def\cR{{\mathcal R}}
\def\cO{{\mathcal O}}
\def\cU{{\mathcal U}}
\def\Exp{\operatorname{Exp}}
\def\sT{{\mathsf T}}
\def\xd{\mathrm{d}}
\def\xi{\mathrm{i}}
\def\cF{{\mathcal F}}
\newcommand{\id}{\mathrm{id}}
\def\n{\nabla}
\newcommand{\m}{{\medskip}}
\newcommand{\mn}{{\medskip\noindent}}
\newcommand{\no}{{\noindent}}
\def\Rt{{\R^\ti}}
\begin{document}
\title{Regular contact manifolds:\\ a generalization of the Boothby-Wang theorem}
\author{Katarzyna Grabowska\footnote{email:konieczn@fuw.edu.pl }\\
\textit{Faculty of Physics,
                University of Warsaw}
\\ \\
Janusz Grabowski\footnote{email: jagrab@impan.pl} \\
\textit{Institute of Mathematics, Polish Academy of Sciences}}
\date{}
\maketitle
\begin{abstract} A \emph{regular contact manifold} is a manifold $M$ equipped with a globally defined contact form $\zh$ such that the topological space $M/\cR$ of orbits (trajectories) of the Reeb vector field $\cR$ of $\zh$ carries a smooth manifold structure, so the canonical projection $p:M\to M/\cR$ is a smooth fibration. We show that, under the additional assumption that $\cR$ is a complete vector field, this fibration is actually either an $S^1$- or an $\R$-principal bundle. Moreover, there exists a unique symplectic form $\zw$ on $M/\cR$ such that $p^*(\zw)=\xd\zh$ which is $\zr$-integral in the $S^1$-bundle case, where $\zr$ is the minimal period of the $S^1$-action, so the symplectic manifold  $(M/\cR,\zw)$ admits a prequantization. We do not assume that $M$ is compact.

\medskip\noindent
{\bf Keywords:}
\emph{contact form; Reeb vector field; smooth fibration; fiber bundle; principal bundle; symplectic form; integrality condition; prequantization.}\par

\smallskip\noindent
{\bf MSC 2020:} 53D10; 53D35; 37C10; 37C86.	

\end{abstract}
\section{Introduction}
The main object of our studies in this paper is the structure of regular contact manifolds.
More precisely, let $\zh$ be a contact form on a manifold $M$, which will be assumed to be connected throughout this paper. We say that the contact manifold $(M,\zh)$ is \emph{regular} if the foliation $\cF_\cR$ of $M$ by orbits of the Reeb vector field $\cR$ of $\zh$ is simple, i.e., the space $M/\cF_\cR$ of orbits has a manifold structure such that the canonical projection $p:M\to M_\cR$ is a smooth fibration. Here, by orbits of $\cR$ (which is a nonvanishing vector field on $M$) we understand the 1-dimensional submanifolds of $M$, being the images of trajectories of $\cR$. Since any orbit does not depend on the parametrization, $\cF_\cR=\cF_{f\cR}$ for a nonvanishing function $f:M\to\R$.

\m The structure of compact regular contact manifolds $(M,\zh)$ has been studied in \cite{Boothby:1958}. Theorem 1 there says that on such a manifold there exists an equivalent contact form $\zh'$ whose Reeb vector field $\cR'$ is periodic, thus inducing a principal action of the group $S^1=\R/\Z$ on $M$. However, the proof in \cite{Boothby:1958} is incomplete in one important respect. It has been already noticed and corrected in \cite{Geiges:2008,Niederkruger:2005}. This proof starts with the observation that, as orbits are closed submanifolds in the regular case, for compact $M$ they all are circles, so $\cR$ is periodic on each orbit, however, \emph{a priori} with different periods. It has been proved that one can find a nonvanishing smooth function $f:M\to\R$ such that all trajectories of $\cR'=f\cR$ have a common minimal period, so $\cR'$ induces a free $S^1$-action on $M$. Further, the authors define this equivalent contact form to be $\zh'=\zh/f$, and claim that $\cR'$ is the Reeb vector field of $\zh'$, which is clearly false, except when $f$ is a constant.

\m In this paper we show, in particular, that \cite[Theorem 1]{Boothby:1958} is actually true and in a stronger version: the Reeb vector field $\cR$ itself is automatically the fundamental vector field of a free $S^1$-action, so we do not need to seek for a rescaling of $\cR$. What is more, our main result generalizes the Boothby-Wang theorem, as it includes arbitrary (not only compact) regular contact manifolds for which the Reeb vector field is complete (this is automatically satisfied for compact $M$). It is clear that \emph{a priori} $\cR$ may have both, compact orbits as well as non-compact ones. We show, however, that for complete $\cR$ we have only two possibilities: either $\cR$ has no periodic orbits, or all orbits share the same minimal period.
\begin{theorem}
If the Reeb vector field $\cR$ on a regular contact manifold $(M,\zh)$ is complete, then it induces on $M$ either an $\R$- or an $S^1$-principal action, so $p:M\to M/\cR$ is a principal bundle. Moreover, there exists a symplectic form $\zw$ on $M/\cR$ such that $p^*(\zw)=\xd\zh$ and, in the first case, $\zw$ is exact, while in the second case the symplectic form $\zw$ is $\zr$-integral, where $\zr$ is the minimal period of the $S^1$-action.
\end{theorem}
\no It is easy to see that, in the case of an $S^1$-action, $(M,\zh)$ induces a prequantization of the symplectic manifold $(M/\cR,\zw)$. Note also that if $\cR$ indices a non-free $S^1$-action, then the quotient manifold $M/S^1$ is generally only an orbifold \cite{Kegel:2021}.

\section{All equivalent contact forms in one picture}
Generally, a \emph{contact structure} on a manifold $M$ of dimension $(2n+1)$ is a \emph{maximally nonintegrable} distribution $C\subset \sT M$, being a \emph{field of hyperplanes} on $M$, i.e., a distribution with rank $2n$. Such a distribution is, at least locally, the kernel of a nonvanishing 1-form $\zh$ on $M$, i.e., $C=\ker(\zh)$. Of course, the 1-form $\zh$ is determined only up to conformal equivalence. Such a (local) 1-form we call a \emph{contact form}. It is characterized by the condition that $\zh\we(\xd\zh)^n$ is nonvanishing, i.e., is a volume form. In this paper we will consider only \emph{trivial} (\emph{co-oriented}) contact structures, i.e., manifolds equipped with a globally defined contact form. The local picture for contact forms is fully described by the following.
\begin{theorem}[Contact Darboux Theorem] Let $\zh$ be 1-form on a manifold $M$ of dimension $(2n+1)$. Then $\zh$ is a contact form if and only if around every point of $M$ there are local coordinates $(z,p_i,q^i)$, $i=1,\dots,n$, in which $\zh$ reads
\be\label{Dc} \zh=\xd z-p_i\,\xd q^i.\ee
\end{theorem}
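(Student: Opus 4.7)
For vector fields $X,Y$ taking values in $C=\ker(\zh)$,
$$\xd\zh(X,Y)=X(\zh(Y))-Y(\zh(X))-\zh([X,Y])=-\zh([X,Y]),$$
so, under the pointwise trivialization $L^C\cong\R$ afforded by any $v$ with $\zh(v)=1$, the form $\zn^C$ corresponds to $-\xd\zh|_C$. Hence $\zn^C_x$ is nondegenerate iff the skew form $\xd\zh|_{C_x}$ on the $2n$-dimensional space $C_x$ is nondegenerate, iff $(\xd\zh|_{C_x})^n\ne 0$. Using the splitting $\sT_xM=C_x\oplus\R v$, the top form $\zh\we(\xd\zh)^n$ evaluated on a volume basis of $\sT_xM$ reduces, up to a nonzero factor, to $(\xd\zh|_{C_x})^n$ evaluated on a basis of $C_x$, proving the equivalence.

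\mn\textbf{Part 2 (preparing a transverse slice).} Assume $\zh\we(\xd\zh)^n$ is a volume form, and define the Reeb vector field $\cR$ by $\zh(\cR)=1$ and $\ix_\cR\xd\zh=0$; it is nonvanishing. Near $x_0$ the flow-box theorem supplies coordinates $(t,y^1,\dots,y^{2n})$ with $\cR=\pa/\pa t$. Write $\zh=\xd t+\zb$ where $\zb(\pa/\pa t)=0$; then $\Ll_{\pa/\pa t}\zb=\xd(\ix_{\pa/\pa t}\zb)+\ix_{\pa/\pa t}\xd\zh=0$, so $\zb$ is $t$-independent, i.e., pulled back from the transverse slice $\zS=\{t=0\}$. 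The identity
$$\zh\we(\xd\zh)^n=\xd t\we(\xd\zb)^n$$
(the term $\zb\we(\xd\zb)^n$ vanishes as it has $2n+1$ wedge factors in only $2n$ transverse directions) forces $\zw_\zS:=\xd\zb|_\zS$ to be a symplectic form on the $2n$-manifold $\zS$ near $x_0$.

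\mn\textbf{Part 3 (Darboux on $\zS$ and lifting to $M$).} By the classical symplectic Darboux theorem, one can find coordinates $(q^1,\dots,q^n,P_1,\dots,P_n)$ on $\zS$ near $x_0$ with $\zw_\zS=\sum \xd q^i\we\xd P_i$. Since $\xd(\zb|_\zS+\sum P_i\,\xd q^i)=0$, the Poincar\'e lemma yields a local function $g$ on $\zS$ with $\zb|_\zS=\xd g-\sum P_i\,\xd q^i$. Extend $g$, $P_i$ and $q^i$ to the flow-box by declaring them $t$-independent; because $\zb$ is itself $t$-independent and annihilates $\pa/\pa t$, the formula $\zb=\xd g-\sum P_i\,\xd q^i$ persists on the tube. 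Setting $z=t+g$ and $p_i=P_i$ yields $\zh=\xd t+\zb=\xd z-p_i\,\xd q^i$. The differentials $\xd z,\xd q^i,\xd p_i$ span $\sT^*_{x_0}M$ because $\xd q^i,\xd p_i$ are independent pullbacks from $\zS$ while $\xd z=\xd t+\xd g$ contributes the transverse $\xd t|_{x_0}$.

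\mn\textbf{Main obstacle.} The substantive input is the symplectic Darboux theorem on $\zS$; granting that, Part 1 is a linear-algebra observation and Parts 2--3 reduce to a standard flow-box construction together with the $t$-invariant extension of the symplectic coordinates along the Reeb orbits.
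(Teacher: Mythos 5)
The paper states this classical theorem without giving any proof, so there is nothing to compare against; your argument is the standard one (linear algebra for the equivalence of the two nondegeneracy conditions, then a Reeb flow-box reducing the normal form to the symplectic Darboux theorem on a transverse slice) and it is correct. The only cosmetic slip is in Part 2, where Cartan's formula for $\zb$ should read $\Ll_{\pa/\pa t}\zb=\xd(\ix_{\pa/\pa t}\zb)+\ix_{\pa/\pa t}\xd\zb$; this is harmless since $\xd\zb=\xd\zh$.
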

\no It is now easy to see that, for any nonvanishing function $f$ on $M$, a 1-form $\zh$ is a contact form if and only if $f\zh$ is a contact 1-form. The contact form $f\zh$ we call \emph{equivalent} with $\zh$. It defines the same contact distribution $C=\ker(\zh)$.

\no Any contact form $\zh$ on $M$ determines uniquely a nonvanishing vector field $\cR$ on $M$, called the \emph{Reeb vector field}, which is characterized by the equations
$$i_\cR\zh=1\quad \text{and}\quad i_\cR\xd\zh=0.$$
The Reeb vector field for the contact form (\ref{Dc}) is $\cR=\pa_z$.

\m Now, for a cooriented contact manifold $(M,\zh)$ let us consider its symplectization understood as the manifold $\tM=M\ti\R_+$ equipped with the 1-homogeneous symplectic form
$$\tw(x,s)=\xd(s\cdot\zh)(x,s)=\xd s\we\zh(x)+s\cdot\xd\zh(x).$$
Here, $\R_+$ is the multiplicative group of positive integers and the homogeneity of degree $a\in\R$ of a differential form $\zb$ on $\tM$ means that
\be\label{ho}h_s^*(\zb)=s^a\cdot\zb,\ee
where $h_s(x,s_0)=(x,ss_0)$ is the canonical principal action of $\R_+$ on $\tM$. In particular, $M=\tM/\R_+$, so
$$\zt:\tM\ni(x,s)\mapsto x\in M=\tM/\R_+$$ is the trivial $\R_+$-principal bundle. The generator of the $\R_+$-action is the vector field $\n=s\,\pa_s$ which we will call the \emph{Euler vector field}.
Note that (\ref{ho}) is equivalent to $\Ll_\n(\zb)=a\cdot\zb$. The 1-form $$\zvy(x,s)=(i_\n\tw)=s\cdot\zh,$$
which is the only 1-homogeneous semibasic potential for $\tw$, $\xd\zvy=\tw$, we call the \emph{Liouville form}.

The homogeneity of $\tw$ implies that the Hamiltonian vector field $X_H$ of any 1-homogeneous Hamiltonian $H$ on $\tM$ is $\R_+$-invariant, and therefore projects to a vector field $X^c_H$ on $M$. Since 1-homogeneous Hamiltonians are of the form $H(x,s)=s\cdot G(x)$, we will denote $X^c_{sG}$ simply by $X_G$. In contact mechanics, $X_G$ is called the \emph{contact Hamiltonian vector field} with the Hamiltonian $G$. It is easy to show (cf. \cite{Grabowska:2022}) that $X_G$ is uniquely determined by the equations
$$
i_{X_G}\eta =G,\qquad i_{X_G}\xd\eta =\mathcal{R}({G})\eta-\xd {G}\,,
$$
where $\cR$ is the Reeb vector field of $\zh$. It is indeed a contact vector field, since
$$\Ll_{X_G}\zh=\xd(i_{X_G}\zh)+i_{X_G}\xd\zh=\xd G+\mathcal{R}({G})\eta-\xd G=\mathcal{R}({G}) \eta.$$

\mn Any function $F:M\to\R_+$ defines a section
$$\zs_F:M\to\tM,\quad \zs_F(x)=(x,F(x))\in\tM,$$
of the principal bundle $\zt:\tM\to M$ whose image is $M_F=\{(x,F(x)):\,x\in M\}$. In other words, $M_F$ is defined by the equation $H(x,s)=1$, where $H$ is the 1-homogeneous Hamiltonian $H(x,s)=s/F(x)$ on $\tM$. It is obvious that $\zs_F$ is a diffeomorphism of $M$ onto $M_F$. The canonical contact form on $M_F$ is the restriction $\zh_F$ of the Liouville form $\zvy$ to $M_F$, and $\zs_F^*(\zh_F)=F\zh$ is a contact form equivalent to $\zh$. Conversely, any 1-form  equivalent to $\zh$ can be obtained in this way for some $F$. In other words, all contact forms on $M$ which are equivalent to $\zh$ are in this one-to-one correspondence with sections of the principal bundle $\zt:\tM\to M$. Since $M_F$ is of codimension 1 in $\tM$, it is a coisotropic submanifold whose characteristic foliation consists of orbits of the Hamiltonian vector field $X_H$ of the 1-homogeneous Hamiltonian $H(x,s)=s/F(x)$. The restriction of the symplectic form $\tw=\xd(s\zh)$ to $M_F$ is $\xd(F\zh)$, so the projection $X$ of $X_H$ to $M$ satisfies $i_X\xd(F\zh)=0$. Moreover, because $H$ is 1-homogeneous, we have $\Ll_\n(\xd H)=\xd H$, so
$$\xd(i_{X_H}\zvy)=\xd(i_{X_H}i_\n\tw)=-\xd(i_\n i_{X_H}\tw)=\xd(i_\n\xd H)=\Ll_\n(\xd H)=\xd H.$$
Since both, $i_{X_H}\zvy$ and $\xd H$ are 1-homogeneous, it follows that $H=i_{X_H}\zvy$.
Hence, $i_{X_H}\zvy=1$ on $M_F$, thus $i_X(F\zh)=1$. Summing up, we get that $X$ is the Reeb vector field of the contact form $F\zh$. Note that the submanifolds $M_F$ of $\tM$ are submanifolds \emph{of contact type} in the terminology of Weinstein \cite{Weinstein:1979}.

\begin{proposition}\label{reeb1}
For $F:M\to\R_+$, the Reeb vector field $\cR_F$ of the contact form $\zh_F=F\zh$ on $M$ is the Hamiltonian contact vector field associated with the contact Hamiltonian $1/F$. In other words,
$\cR_F=\zt_*(X_H)$, where $X_H$ is the Hamiltonian vector field on $\tM$ associated with the 1-homogeneous Hamiltonian $H(x,s)=s/F(x)$.
\end{proposition}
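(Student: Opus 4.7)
The plan is to exploit the presentation of equivalent contact forms as image submanifolds $M_F=\zs_F(M)\subset\tM$ inside the symplectization, which the paragraph preceding the proposition has already set up. The Reeb vector field $\cR_F$ is uniquely characterized by the two equations
\[
i_{\cR_F}(F\zh)=1,\qquad i_{\cR_F}\xd(F\zh)=0,
\]
so it suffices to verify these for the vector field $X:=\zt_*(X_H)$ with $H(x,s)=s/F(x)$. Moreover, since the text defines the contact Hamiltonian vector field $X_G$ precisely as $\zt_*(X_{sG})$, the two formulations in the proposition coincide for $G=1/F$ (so that $sG=H$); establishing $\cR_F=\zt_*(X_H)$ therefore also yields $\cR_F=X_{1/F}$.

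First I would note that $M_F=H^{-1}(1)$, so $X_H$ is tangent to $M_F$, and that the $1$-homogeneity of $H$ makes $X_H$ an $\R_+$-invariant vector field, which therefore descends to the well-defined $X=\zt_*(X_H)$ on $M$. Since $\zt\circ\zs_F=\id_M$ and $X_H$ is tangent to $M_F$, the vector fields $X$ on $M$ and $X_H|_{M_F}$ on $M_F$ are $\zs_F$-related. Combined with the identities $\zs_F^*(\zvy|_{M_F})=F\zh$ and $\zs_F^*(\tw|_{M_F})=\xd(F\zh)$, this reduces the verification of the two Reeb equations for $X$ to checking, on $M_F$, that $i_{X_H}\zvy=1$ and that $i_{X_H}\tw$ annihilates $\sT M_F$.

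Both of these are already available. The first follows by restricting to $M_F$ the $1$-homogeneity identity $i_{X_H}\zvy=H$ proved in the paragraph above the proposition, since $H\equiv 1$ on $M_F$; pulling back by $\zs_F$ gives $i_X(F\zh)=1$. The second follows from the defining equation $i_{X_H}\tw=-\xd H$ of the Hamiltonian vector field: because $\xd H$ vanishes on $\sT M_F$, the restriction of $i_{X_H}\tw$ to $\sT M_F$ vanishes as well, and pulling back via $\zs_F$ yields $i_X\xd(F\zh)=0$. Hence $X=\cR_F$. I do not foresee any serious obstacle; the only small point to keep straight is the compatibility between viewing $X$ as the $\R_+$-quotient of $X_H$ and as the $\zs_F^{-1}$-image of $X_H|_{M_F}$, which holds because $X_H$ is tangent to $M_F$ and $\zt\circ\zs_F=\id_M$.
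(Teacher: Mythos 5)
Your proposal is correct and follows essentially the same route as the paper, which proves the proposition in the paragraph preceding its statement: identify $M_F$ with the level set $H^{-1}(1)$, use the $1$-homogeneity identity $i_{X_H}\zvy=H$ to get $i_X(F\zh)=1$, and use $i_{X_H}\tw=-\xd H$ together with the coisotropy of $M_F$ to get $i_X\xd(F\zh)=0$. The only addition is your explicit remark on the compatibility of the $\R_+$-quotient description of $X$ with the $\zs_F$-related description, which the paper leaves implicit.
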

\begin{remark} The picture presented above is a particular case of a \emph{symplectic $\Rt$-principal bundle} in the terminology of \cite{Grabowski:2013} (here, $\Rt=\R\setminus\{0\}$ is the multiplicative group of nonzero reals). Such bundles are defined as $\Rt$-principal bundles $\zt:\tM\to M$ equipped additionally with a 1-homogeneous symplectic form $\tw$. The importance of these geometric objects comes from the fact that they canonically induce on $M$ a contact structure, not necessarily cooriented. Cooriented contact structures correspond to trivial $\Rt$-principal bundles
$\zt:\tM=M\ti\Rt\to M$ which, for connected $M$, consists of two connected components, and in this case it is enough to consider only the component $M\ti\R_+$. If the principal bundle $\tM$ is not trivializable, it is connected for connected $M$. The necessity of using the non-connected group $\Rt$ instead of just $\R_+$ comes from the requirement of including non-cooriented contact structures into the picture. Closer studies of such structures, together with the corresponding contact Hamiltonian systems, one can find in a series of papers \cite{Bruce:2017,Grabowska:2022,Grabowska:2022a,Grabowski:2013}.
\end{remark}

\mn
The mistake in \cite{Boothby:1958} was the false claim that, for a nonvanishing function $f$ on $M$, the vector field $f\cR$ is the Reeb vector field for the contact form $\zh/f$.
Indeed, if $\cR'=f\cR$ is the Reeb vector field of $\zh'=F\zh$, then necessarily $F=1/f$. But
$$\xd\zh'=\xd(\zh/f)=\xd\zh/f-\big(\xd f/f^2\big)\we\zh,$$
and therefore
$$i_{\cR'}\xd\zh'=\xd f/f-\big(\cR(f)/f\big)\zh.$$
This is constantly 0 if and only if $\cR(f)\zh=\xd f$. Hence, for any vector field $X$ taking values in the contact distribution $C=\ker(\zh)$, we get
$$0=i_X\big(\cR(f)\zh\big)=X(f).$$
This implies that also $[X,Y](f)=0$ for all $X,Y\in C=\ker(\zh)$. But $C$ is maximally non-integrable, so
such Lie brackets span the whole tangent bundle $\sT M$, thus $f$ is a constant. Actually, the phase portraits of
the Reeb vector fields of equivalent contact forms can be drastically different.
\begin{example}\label{e1}
The unit sphere $M=S^3$ in $\R^4$ with coordinates $(q^1,p_1,q^2,p_2)$ carries a canonical contact form $\zh$ being the restriction of the Liouville 1-form
$$\zvy=q^1\xd p_1-p_1\xd q^1+q^2\xd p_2-p_2\xd q^2$$
to the sphere. Note that $\R^4$ is canonically a symplectic manifold with the symplectic form
$$\zw=2\big(\xd q^1\we\xd p_1+\xd q^2\we\xd p_2\big).$$ We can also view $M$ as the unit sphere in $\C^2$, where we identify $z_k=q^k+ip_k\in\C$ with $(q^k,p_k)\in\R^2$, $k=1,2$. The Hamiltonian vector field
$$X_H=\big(q^1\pa_{p_1}-p_1\pa_{q^1}\big)+\big(q^2\pa_{p_2}-p_2\pa_{q^2}\big)$$
with the Hamiltonian
$$H(z_1,z_2)=\frac{1}{4}\big(|z_1|^2+|z_2|^2\big)$$
is tangent to $S^3$ and represents there the Reeb vector field of $\zh$.

\mn For $a,b\ge 0$ consider a new Hamiltonian
$$H_{a,b}(z_1,z_2)=\frac{a}{4}|z_1|^2+\frac{b}{4}|z_2|^2.$$
Denote with $F_{a,b}$ the restriction of $H_{a,b}$ to $S^3:|z_1|^2+|z_2|^2=1$.
Of course, $H_{1,1}$ is our old Hamiltonian $H$. The projection along the rays of $\R^4$ onto $S^3$ maps the Hamiltonian vector field $X_{H_{a,b}}$ onto the Reeb vector field $\cR_{a,b}$ of the contact form $\zh/F_{a,b}$ on $S^3$. Consequently, trajectories of $X_{H_{a,b}}$ project onto trajectories of $\cR_{a,b}$.
It is easy to see that
$$
X_{H_{a,b}}=a\big(q^1\pa_{p_1}-p_1\pa_{q^1}\big)+b\big(q^2\pa_{p_2}-p_2\pa_{q^2}\big),
$$
so the trajectories of $X_{H_{a,b}}$ are of the form
$$\R\ni t\mapsto \big(e^{ait}z_1,e^{bit}z_2\big),$$
They project onto the trajectories
$$\R\ni t\mapsto \frac{1}{\sqrt{|z_1|^2+|z_2|^2}}\Big(e^{ait}z_1,e^{bit}z_2\Big)$$
of $\cR_{a,b}$. For every $0<r<1$, the trajectory starting from a point $(z_1,z_2)$ of the 2-dimensional torus
$$T_r=\{(z_1,z_2)\in S^3:\,|z_1|^2=r\}$$
lays entirely on this torus and is closed if and only if $a/b\in\Q$. Of course, this is the case of the original Reeb vector field $\cR=\cR_{1,1}$, but it is clear now that even an arbitrary close to 1 factor $1/F$ in $\zh/F$ will result in a radical qualitative change of the phase portrait of the corresponding Reeb vector field. We indicated only not closed orbits of $\cR_{a,b}$ for $a/b\notin\Q$, but according to the Weinstein Conjecture (which is true for $S^3$ \cite{Hofer:1993}) there must be a closed orbit of a point of $S^3$. Actually, there are two such orbits,
$$\R\ni t\mapsto \big(0,e^{bit}\big)\ \text{and}\ \R\ni t\mapsto \big(e^{2ait},0\big)$$
with the minimal periods $2\pi/b$ and $2\pi/a$, respectively.
\end{example}
\section{Contactizations}
It is obvious that any co-oriented contact manifold $(M,\zh)$ of dimension $(2n+1)$ is automatically presymplectic with the exact presymplectic form $\xd\zh$ of rank $2n$. In this case the involutive distribution $\ker(\xd\zh)$ is generated by the Reeb vector field $\cR$. In the following we will use the terminology of \cite{Boothby:1958}.
\begin{definition}
A cooriented contact manifold $(M,\zh)$ we call \emph{regular} if the foliation $\cF_\cR$ of $M$ by $\cR$-orbits is simple, i.e., the space $M/\cR=M/\cF_\cR$ of orbits of $\cR$ carries a smooth manifold structure such that the canonical projection $p:M\to M/\cR$ is a surjective submersion. In other words, $p$ is a smooth fibration. We call $(M,\zh)$ \emph{complete} if the Reeb vector field is complete.
\end{definition}
\begin{remark} Of course, any regular compact contact manifold is complete. The dynamics of the Reeb vector fields on compact contact manifolds is a subject of intensive studies, partly because its relation to Hamiltonian dynamics on a fixed energy hypersurface. For a very general geometric approach to contact Hamiltonian mechanics as a part of the classical symplectic Hamiltonian mechanics we refer to \cite{Grabowska:2022}.
A long-standing open problem concerning the Reeb dynamics is the so-called \emph{Weinstein Conjecture}, stating that for contact forms on compact manifolds the corresponding Reeb vector fields carry at least one periodic orbit. Note that in \cite{Weinstein:1979} it was supposed additionally that the manifold is simply connected, because the author suspected the existence of a counterexample for a torus. The counterexample appeared to be false and this assumption has been finally dropped. This conjecture has been proved for some particular cases, especially for 3-dimensional manifolds \cite{Taubes:2007}. The origins and the history of the Weinstein Conjecture are nicely described in \cite{Pasquotto:2012}.
\end{remark}
\no The contact manifolds considered by Weinstein were hypersurfaces $M$ in a symplectic manifold $(P,\zW)$, equipped with a contact form $\zh$ such that $\zW\,\big|_M=\xd\zh$, where $\zW\,\big|_M$ is the restriction of $\zW$ to $M$. Weinstein called them \emph{hypersurfaces of contact type}. Since any hypersurface in a symplectic manifold is automatically coisotropic, the corresponding Reeb vector field on a hypersurface of contact type spans its characteristic distribution. It is proved in \cite{Weinstein:1979} that any hypersurface of contact type can be obtained by a \emph{symplectic-to-contact reduction}.
\begin{proposition}[symplectic-to-contact reduction] Let $(P,\zW)$ be a symplectic manifold and $M$ be a hypersurface in $P$. If $\n$ is a vector field, defined in a neighbourhood of $M$, such that
$\n$ is transversal to $M$ and $\Ll_\n\zW=\zW$, then the restriction $\zh$ to $M$ of the 1-form
$\tilde\zh=i_\n\zW$ is a contact form on $M$, and $\xd\zh=\zW\,\big|_M$.
\end{proposition}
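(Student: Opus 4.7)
The plan is to verify the two claims of the proposition directly. First I would show $\xd\tilde\zh=\zW$ on the neighbourhood where $\n$ is defined, and then check that the restriction $\zh$ is contact by producing a volume form $\zh\we(\xd\zh)^n$ via a single contraction identity.

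Step 1 (the differential). Using Cartan's magic formula and $\xd\zW=0$, we get
\[
\xd\tilde\zh=\xd(i_\n\zW)=\Ll_\n\zW-i_\n\xd\zW=\Ll_\n\zW=\zW .
\]
Restricting to $M$ immediately yields $\xd\zh=\zW\,\big|_M$, which is the second assertion of the proposition.

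Step 2 (contact condition). Since $M$ has odd dimension $2n+1$ and $P$ has dimension $2n+2$, I would aim to show that $\zh\we(\xd\zh)^n$ is nowhere vanishing on $M$ and then invoke the Contact Darboux Theorem. The key observation is the combinatorial identity
\[
i_\n\bigl(\zW^{n+1}\bigr)=(n+1)\,(i_\n\zW)\we\zW^n ,
\]
which follows from the antiderivation property of $i_\n$ together with the fact that $i_\n\zW$ is a $1$-form and $\zW^k$ has even degree (so they commute under wedge). Consequently, on the neighbourhood of $M$,
\[
\tilde\zh\we(\xd\tilde\zh)^n=(i_\n\zW)\we\zW^n=\tfrac{1}{n+1}\,i_\n\bigl(\zW^{n+1}\bigr).
\]
Since $\zW$ is symplectic on $P$, the form $\zW^{n+1}$ is a volume form. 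Transversality of $\n$ to $M$ then guarantees that the $(2n+1)$-form $i_\n(\zW^{n+1})$ pulls back to a nowhere vanishing volume form on $M$: choosing any local frame for $\sT P$ along $M$ consisting of $\n$ followed by a basis of $\sT M$, the contraction $i_\n(\zW^{n+1})$ evaluated on the latter coincides with $\zW^{n+1}$ on the full frame, which is nonzero. Hence $\zh\we(\xd\zh)^n$ is a volume form on $M$, proving that $\zh$ is a contact form.

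There is no serious obstacle; the argument is a short verification. The only slightly delicate points are the identity relating $i_\n(\zW^{n+1})$ to $(i_\n\zW)\we\zW^n$, and the standard linear-algebra fact that contracting a top-form on $P$ by a vector transversal to a hypersurface $M$ produces a top-form on $M$ — both of which are routine once one writes everything in an adapted local frame.
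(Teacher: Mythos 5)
Your proof is correct. Step 1 is exactly the paper's computation ($\xd\, i_\n\zW=\Ll_\n\zW=\zW$ by Cartan's formula and closedness of $\zW$), but for the contact condition you take a genuinely different, though equally short, route. The paper verifies nondegeneracy pointwise: if $X\in\ker(\xd\zh)\cap\ker(\zh)$ at a point of $M$, then $\zW(\n,X)=0$ and $\zW(\sT M,X)=0$, so by transversality $\zW(\cdot,X)$ vanishes on all of $\sT P$ and $X=0$ by nondegeneracy of $\zW$; this establishes the contact property in the form ``$\xd\zh$ is nondegenerate on $\ker\zh$'' without ever forming a top-degree form. You instead use the volume-form criterion of the Contact Darboux Theorem together with the identity $i_\n(\zW^{n+1})=(n+1)\,(i_\n\zW)\we\zW^n$, reducing everything to the fact that contracting the Liouville volume $\zW^{n+1}$ with a vector field transversal to the hypersurface yields a volume form on $M$. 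Both arguments are complete; the paper's is slightly more economical and makes the role of the characteristic (Reeb) direction $\ker(\zW|_M)$ transparent, which is used later in the text, while yours makes the volume form $\zh\we(\xd\zh)^n=\frac{1}{n+1}\,i_\n(\zW^{n+1})\big|_M$ explicit, which is a nice by-product. No gaps.
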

\begin{proof}
The 2-form $\xd\zh$ is the restriction to $M$ of
$$\xd\tilde\zh=\xd\, i_\n\zW=\Ll_\n\zW=\zW.$$
If $X\in(\ker(\xd\zh)\cap\ker(\zh))$, then $\zW(\n,X)=0$ and $\zW(\sT M,X)=0$, thus $X=0$.
\end{proof}
\no Here, $\Ll$ clearly denotes the Lie derivative. There are various generalizations of the above proposition, see for instance \cite{Grabowski:2004}. We have also a canonical reduction going in the reverse direction.
\begin{proposition}[contact-to-symplectic reduction]
Let $(M,\zh)$ be a regular contact manifold, and let $p:M\to N=M/\cR$ be the corresponding fibration. Then there is a unique symplectic form $\zw$ on $N$ such that $p^*(\zw)=\xd\zh$,
\end{proposition}
\begin{proof}
The kernel of the closed 2-form $\xd\zh$ on $M$ is spanned by $\cR$, so one can apply the standard symplectic reduction of presymplectic manifolds.
\end{proof}
\begin{definition} The procedure of passing from $(M,\zh)$ to $(N,\zw)$ we call the \emph{contact-to-symplectic reduction}, and the contact manifold $(M,\zh)$ -- a \emph{contactification} of the symplectic manifold $(N,\zw)$.
\end{definition}
\no The following example is well known in the literature (see e.g. \cite[Appendix 4]{Arnold:1989}).
\begin{example}\label{ex2}
Let $(N,\zw)$ be an exact symplectic manifold, $\zw=\xd\zvy$.  Then
$$\zh(x,t)=\zvy(x)+\xd t$$
is a contact form on $M=N\ti\R$ and $(M,\zh)$ is a contactification of $(N,\zw)$.
\end{example}
\no Finding contactifications of compact symplectic manifolds (which are never exact) is generally a more sophisticated task. Note also that contactifications are never unique, since any open submanifold $\cU\subset M$ of a contactification $(M,\zh)$ of $(N,\zw)$ which projects onto the whole $N$ is also a contactification of $(N,\zw)$ with the contact form $\zh\,\big|_\cU$. Particularly interesting are complete contactifications, e.g. compact contactifications of compact symplectic manifolds which cannot be obtained \emph{via} the above procedure.

\section{Regular contact manifolds with compact orbits}
Let us consider now a regular contact connected manifold $(M,\zh)$, so that $p:M\to N=M/\cR$ is a smooth fibration. The fibers of this fibration consist of orbits of the Reeb vector field $\cR$ (being closed submanifolds in the regular case) which are diffeomorphic either to circles (compact $\cR$-orbits) or to $\R$. On every compact orbit $\cO_x$, the flow generated by $\cR$ is periodic with the minimal period $\zr(x)$. Of course, if $M$ is compact, then all orbits, being closed, are circles automatically.

\mn Suppose for a moment that all $\cR$ orbits are circles. In this situation, Ehresmann's fibration theorem \cite{Ehresmann:1951}, stating that smooth fibrations $p:M\to N$ are locally trivial if only $p$ is a proper map (e.g., $M$ is compact), implies that our fibration by compact $\cR$-orbits is actually a locally trivial fibration. Indeed, this follows from the fact that every fiber has a tubular neighbourhood which is relatively compact.

\mn We would like to know whether the flow of $\cR$ is periodic as a whole.
To get the global periodicity, in \cite{Boothby:1958} the authors proved that the function $\zr(x)$ is smooth, and changed the contact form by multiplying $\zh$ by $1/\zr$. However, such an approach is a mistake, since the Reeb vector field of $\zr\cdot\zh$ is generally not $\cR/\zr$ as they claimed. Actually, the contact form $\zr\cdot\zh$ may even be no longer regular, as shown in Example \ref{e1}. But the situation is in fact much better, and we do not need this passage to an equivalent contact form, as shown in the following proposition.
\begin{proposition}\label{pmain} Suppose that, for a connected regular contact manifold $(M,\zh)$, the fibration $p:M\to N=M/\cR$ is actually a fiber bundle over $N$ with the typical fiber $S^1$. Then the flow generated by $\cR$ on $M$ is periodic with the minimal period $\zr$ which is common for all orbits, and defines a principal action of the group $S^1$, which turns $p:M\to N$ into an $S^1$-principal bundle with the principal connection $\zh$.

\mn Moreover, there exists a uniquely determined symplectic form $\zw$ on $N$ such that $p^*(\zw)=\xd\zh$, so $\zw$ represents the curvature of the connection $\zh$. The cohomology class of this symplectic form is $\Z_\zr$-integral, where $\Z_\zr=\zr\cdot\Z$, i.e., $[\zw/\zr]\in H^2(N,\Z)$.
\end{proposition}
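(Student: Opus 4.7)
The plan is first to show that the minimal period function $\zr(x)$ is a global constant, then to recognize the resulting $\R/(2\pi\hbar\Z)$-action of the Reeb flow as a free $S^1$-action with $\zh$ as its principal connection form, and finally to descend $\xd\zh$ to the symplectic curvature $\zw$ on $N$ and verify its integrality.

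For constancy of $\zr$, I would express it as the fiber integral $\zr(y)=\int_{p^{-1}(y)}\zh$. Since $p:M\to N$ is a fiber bundle with compact, closed fiber $S^1$, integration along fibers $p_!$ sends $k$-forms on $M$ to $(k{-}1)$-forms on $N$ smoothly and commutes with $\xd$ up to sign, so $\xd\zr=\pm\,p_!(\xd\zh)$. The key observation is that $\xd\zh$ has no mixed vertical-horizontal component: for any $v\in T_yN$ with horizontal lift $\tilde v\in\ker\zh$, one has $\xd\zh(\tilde v,\cR)=0$ from $i_\cR\xd\zh=0$, so $\iota_{\tilde v}\xd\zh$ vanishes identically along the fiber and its fiber integral is zero. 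Hence $\xd\zr=0$ and $N$ being connected forces $\zr$ to equal a constant, which we denote $2\pi\hbar$.

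Once $\zr$ is constant, $t\mapsto\Exp(t\cR)$ factors through a free smooth action of $S^1=\R/2\pi\hbar\Z$ on $M$ whose orbits are precisely the fibers of $p$, turning $p$ into a principal $S^1$-bundle. The form $\zh$ is a principal connection because $\zh(\cR)=1$ normalizes it on the fundamental vector field, while $\Ll_\cR\zh=i_\cR\xd\zh+\xd(i_\cR\zh)=0$ gives invariance under the action. Horizontality and invariance of $\xd\zh$ then make it basic, so it descends uniquely to a closed 2-form $\zw$ on $N$ with $p^*\zw=\xd\zh$; nondegeneracy follows because the contact condition $\zh\we(\xd\zh)^n\ne 0$ forces $(\xd\zh)^n$ to be nondegenerate on $\ker\zh$, which $p_*$ identifies isomorphically with $TN$, and uniqueness is immediate from injectivity of $p^*$ on forms.

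For integrality, I would rescale to a standard $U(1)$-connection: identifying $S^1=\R/2\pi\hbar\Z$ with $U(1)=\R/2\pi\Z$ via $t\mapsto t/\hbar$ makes $\hbar\cR$ the fundamental vector field of the standard circle action, so the connection 1-form in standard normalization is $\alpha=\zh/\hbar$ and its curvature is $\xd\alpha=p^*(\zw/\hbar)$. Chern--Weil theory (or equivalently a direct transition-cocycle argument using local sections of $p$ and $U(1)$-valued overlap functions) then asserts that the first Chern class $[\xd\alpha/2\pi]=[\zw/2\pi\hbar]$ lies in $H^2(N,\Z)$, which is the claimed integrality. The main obstacle is the constancy of $\zr$: this is precisely the step mishandled in \cite{Boothby:1958}, and the fiber integration combined with $i_\cR\xd\zh=0$ is what closes the gap cleanly.
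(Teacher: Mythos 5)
Your proposal is correct and proves all three assertions, but it packages the two key steps differently from the paper. For the crucial point --- constancy of the minimal period --- the paper works in a local trivialization $U\ti S^1$ lifted to $U\ti\R$, writes $\zh=g\,\xd\zt+f_a\,\xd x^a$ with $1$-periodic coefficients, extracts $\pa g/\pa x^a=\pa f_a/\pa\zt$ from $i_\cR\xd\zh=0$, and concludes $\pa\zr/\pa x^a=\int_0^1\pa f_a/\pa\zt\,\xd\zt=0$ by periodicity; your identity $\zr=p_!\zh$ together with $\xd\zr=\pm\,p_!(\xd\zh)=0$ is precisely this computation stated invariantly via fiber integration, with $i_\cR\xd\zh=0$ ensuring that the restriction of $i_{\tilde v}\xd\zh$ to each fibre vanishes (your phrase ``vanishes identically along the fiber'' should be read as ``pulls back to zero on the fibre,'' which is all the fibre integral sees --- and note that any lift of $v$ works, not only the horizontal one, since the ambiguity is a multiple of $\cR$). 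The genuine divergence is in the integrality statement: you rescale to the $U(1)$-connection $\zh/\hbar$ and invoke the integrality of the first Chern class from Chern--Weil theory, whereas the paper proves this from scratch by exhibiting the \v Cech cocycle $A_{\za\zg}+A_{\zg\zb}+A_{\zb\za}\in 2\pi\hbar\,\Z$ of fibrewise translations and matching it against the local potentials $\zvy_\za$ of $\zw$ via the de Rham isomorphism. Your route is shorter but leans on two pieces of standard machinery (smoothness of $p_!$ and its commutation with $\xd$ for compact oriented fibres, and integrality of the normalized curvature class), while the paper's coordinate and cocycle computations keep the argument self-contained; the remaining points --- freeness of the $S^1$-action from minimality of the period, $\zh$ as a principal connection via $i_\cR\zh=1$ and $\Ll_\cR\zh=0$, and the descent and nondegeneracy of $\zw$ --- agree with the paper's treatment.
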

\begin{proof}
Let us consider a local trivialization $M_U=p^{-1}(U)\simeq U\ti S^1$ of the fiber bundle $p:M\to N$, where $U$ is a connected open subset in $N$ with coordinates $(x^a)\in\R^{2n}$. It will be convenient to consider the standard covering of the circle
$$\R\ni \zt\mapsto [\zt]\in \R/\Z$$
and the corresponding covering
$$\zz:\tU=U\ti\R\to U\ti S^1.$$
It allows us to use coordinates $(x^a,\zt)$ on $\tU$ and to consider functions on $U\ti S^1$ as functions on $\tU$ which are 1-periodic with respect to $\zt$. The pull-back of $\zh$ to $\tU$ is a contact form $\th$ with the pull-back $\tR$ of $\cR$ as the Reeb vector field.

\m Let us write $\th$ in coordinates as
$$\th=g(x,\zt)\,\xd \zt+f_a(x,\zt)\,\xd x^a,$$
where $g(x,\zt)$ and $f_a(x,\zt)$ are 1-periodic in $\zt$. Since $\tR$ is tangent to the orbits,  $\tR=\pa_\zt/g$ and
\be\label{int}
\int_0^1g(x,\zt)\,\xd \zt=\zr(x)
\ee
is the minimal period of $\cR$ on $\cO_x$. We also have
$$
\xd\th=\frac{\pa g}{\pa x^a}(x,\zt)\,\xd x^a\we\xd \zt+\frac{\pa f_a}{\pa \zt}(x,\zt)\,\xd \zt\we\xd x^a
+\frac{\pa f_a}{\pa x^b}(x,\zt)\,\xd x^b\we\xd x^a.
$$
Since $i_\tR\xd\th=0$,
$$\Big(\frac{\pa g}{\pa x^a}(x,\zt)-\frac{\pa f_a}{\pa \zt}(x,\zt)\Big)\,\xd x^a=0,$$
so
$$\frac{\pa g}{\pa x^a}(x,\zt)=\frac{\pa f_a}{\pa \zt}(x,\zt)$$
for all $a$. Consequently (cf. (\ref{int})),
\beas &&\frac{\pa \zr}{\pa x^a}(x)=\frac{\pa}{\pa x^a}\Big(\int_0^1g(x,\zt)\,\xd \zt\Big) =\int_0^1\frac{\pa g}{\pa x^a}(x,\zt)\,\xd \zt\\
&&=\int_0^1\frac{\pa f_a}{\pa \zt}(x,\zt)\,\xd \zt=f_a(x,1)-f_a(x,0)=0,
\eeas
because $f_a(x,\zt)$ are 1-periodic with respect to $\zt$. Hence,
$$\frac{\pa \zr}{\pa x^a}(x)=0$$
for all $a$, thus $\zr(x)$ is constant on $U$. But $M$ is connected, so $\zr(x)$ is globally constant, $\zr(x)=\zr$. We have $\Exp(t\cR)=\id$
if and only if $t\in\Z_\zr$, where $\Z_\zr=\zr\cdot\Z$. In other words, $p:M\to N$ is a $\T_\hbar$-principal bundle in the terminology of \cite{Bates:1997}, where $\hbar=\zr/2\pi$.

\m Now, we will change the coordinates in $\tU$ into $(x^a,t)$, parametrizing fibers of $\tU$ by trajectories of the lifted Reeb vector field $\tR=\pa_\zt/g$,
$$\big(x,t(x,\zt)\big)=\Big(x,\int_0^\zt g(x,s)\xd s\Big).$$
A direct inspection shows that the diffeomorphism $(x,\zt)\mapsto(x,t(x,\zt))$ maps the vector field $\tR$ onto $\pa_t$.  Hence, in the coordinates $(x^a,t)$ our contact form reads
\be\label{zhU}\th=g(x,\zt)\xd\zt+f_a(x,\zt)\xd x^a=\xd t+h_a(x,t)\,\xd x^a\ee
for some functions $h_a$. By a direct check we get that $h_a(x,t)=f_a(x,0)$, so
$h_a(x,t)=h_a(x)$ does not depend on $t$ for all $a$, and we get
\be\label{zhU1}\xd\th=\frac{\pa h_a}{\pa x^b}(x)\,\xd x^b\we\xd x^a.\ee
Finally, since $\xd\zh$ depends only on coordinates $(x^a)$, it is the pull-back of a uniquely determined 2-form $\zw_U$ on $U$ having in coordinates $(x^a)$ formally the form (\ref{zhU1}). As $\xd\th$ is of rank $2n$, the form $\zw_U$ is symplectic. From the uniqueness of $\zw_U$ it follows that there is a globally defined symplectic form $\zw$ on $N$ such that $\zw\big|_U=\zw_U$, and $(M,\zh)$ is a contactification of $\zw$. Because for $\zvy_U=h_a(x)\,\xd x^a$ we have $\zw_U=\xd\zvy_U$,
each $\zw_U$ is exact, but clearly $\zw$ need not to be exact globally. Let us study this problem in more detail.

\mn To this end, let us choose a \v Cech cover $\{U_\za\}$ of $N$ (all intersections of the cover members are connected and contractible), so the $S^1$-bundles $p:M_\za=p^{-1}(U_\za)\to U_\za$ are trivial $S^1$-principal bundles, and equip $\tU_\za$ with local coordinates $(x^a_\za,t_\za)$ as above. The contact form $\zh$ in these coordinates reads (cf. (\ref{zhU}))
$$\zh_\za=\widetilde{\zh}_{U_\za}=\xd t_\za+h_a(x)\,\xd x^a_\za.$$
On the intersection $U_{\za\zb}=U_\za\cap U_\zb$ consider coordinates $(x^a,t_\za)$ and $(x^a,t_\zb)$, respectively, where $(x^a)$ are local coordinates on $U_{\za\zb}$, the same for $U_\za$ and $U_\zb$. Since the diffeomorphism $(x^a,t_\za)\mapsto(x^a,t_\zb)$ corresponds to an isomorphism of $S^1$-principal bundles, we have $t_\zb(x,t_\za)=t_\za+A_{\zb\za}(x)$ for some function $A_{\zb\za}:U_{\za\zb}\to\R$. Of course, $A_{\za\zb}=-A_{\zb\za}$. Since the shift of $t_\za$ by
$$T_{\zg\zb\za}=A_{\za\zg}+A_{\zg\zb}+A_{\zb\za}$$ induces the identity on $M_\za$, we have the cocycle condition
\be\label{cc}
T_{\zg\zb\za}=A_{\za\zg}+A_{\zg\zb}+A_{\zb\za}\in\Z_\zr=\zr\cdot\Z.\ee
The contact form $\zh_\zb$ on $U_{\za\zb}$ in coordinates $(x^a,t_\za)$ reads
$$\zh_\zb=\xd\big(t_\za+A_{\zb\za}\big)+\zvy_\za=\zh_\za+\xd A_{\zb\za}(x),$$
where $\zvy_\za=h_a(x)\,\xd x^a_\za$.
Consequently, on the intersection $U_{\za\zb}$ we have $\xd\zvy_\za=\xd\zvy_\zb=\zw$ and
$$\zvy_\zb-\zvy_\za=\xd A_{\zb\za}.$$
In view of (\ref{cc}) and the de Rham isomorphism, this means that the cohomology
class of the closed 2-form $\zw/\zr$ in $H^2(N;\R)$ lies, in fact, in the image of $H^2(N;\Z)$,
\be\label{ic}\big[\zw/\zr\big]\in H^2(N;\Z).\ee
Such closed 2-forms $\zw$ are called \emph{$\zr$-integral}, and it is known that they are characterized by the property $\int_\zS\zw\in\Z_\zr$ for each closed 2-dimensional surface $\zS$ in $N$.

\end{proof}
\no Note that we did not assume that $M$ is compact, as it is done in \cite{Boothby:1958}. This fact will be crucial for the next steps.
\begin{remark}
In the geometric quantization (see e.g. \cite{Bates:1997}), for $\zr$ is taken $2\pi\hbar$, where $\hbar$ is the Planck constant, and (\ref{ic}) is the well-known condition for the existence of a prequantum bundle on the symplectic manifold $(N,\zw)$.
This is because there is a one-to-one correspondence between $S^1$-principal bundles on $N$ and complex Hermitian line bundles $\C\hookrightarrow L\to N$.

Indeed, the multiplicative group $\C^\ti$ of nonzero complex numbers acts canonically on $L$, and the length $\|z\cdot v\|$ of the vector $z\cdot v\in L$ (with respect to the Hermitian metric) is $|z|\cdot\|v\|$. Identifying $S^1$ with complex numbers of modulus 1, it is easy to see that the set $M$ of length-1 vectors in $L$, $M=\{v\in L:\,\|v\|=1\}$, being preserved by the $S^1$-action, is automatically an $S^1$-principal bundle, the $S^1$-reduction of $L$.

Conversely, if $p:M\to N$ is an $S^1$-principal bundle, then the transition functions $$F_{\za\zb}:U_{\za\zb}=U_\za\cap U_\zb\to S^1$$
for a \v Cech cover $\{U_\za\}$ of $N$ realizing local trivializations, $p^{-1}(U_\za)\simeq U_\za\ti S^1$, form a \v Cech cocycle,
$$F_{\za\zg}\cdot F_{\zg\zb}\cdot F_{\zb\za}=1$$
on ${U_\za\cap U_\zb\cap U_\zg}$. Constructing a complex Hermitian line bundle from the local data $U_\za\ti\C$ (with the canonical Hermitian structure) and gluing them by
$$G_{\za\zb}:U_{\za\zb}\ti\C\to U_{\za\zb}\ti\C,\quad G_{\za\zb}(x,z)=(x,F_{\za\zb}(x)\cdot z),$$
we get a complex Hermitian line bundle whose reduction to the $S^1$-principal bundle gives back $M$.
\end{remark}

\section{The general case}
Let us assume again that $(M,\zh)$ is a regular contact manifold and $\cR$ is a complete vector field, thus its flow is global and generates a smooth action $(t,x)\mapsto\Exp(t\cR)(x)$ on $M$ of the group $(\R,+)$ of additive reals. \emph{A priori}, the dynamics of $\cR$ could contain both, compact and non-compact orbits. We will show that this is not possible. Note, however, that without the completeness assumption such examples do exist. For instance, one can take a regular contact compact manifold (like $S^3$ in Example \ref{e1}) and remove, say, one point from a fiber. Of course, after removing this point the Reeb vector field is no longer complete.

\subsection{No compact orbits}
Suppose first that all orbits are non-compact. In this case the $\R$-action induced by $\cR$ is free. Moreover, $p:M\to N=M/\cR$ is a fibration with fibers homeomorphic to $\R$, so automatically a fiber bundle (see e.g. \cite[Corollary 31]{Meigniez:2002}). It is easy to see that the free $\R$-action of the flow of $\cR$ which respects the fibers of this fiber bundle is automatically proper, so it turns this fiber bundle into an $\R$-principal bundle. Locally, $p^{-1}(U)=U\ti\R$, and using the flow of $\cR$ to parametrize the fibers, we get local coordinates $(x^a,t)$ on $U\ti\R$ such that $\cR=\pa_t$. In other words, the $\R$-action on $U\ti\R$ is $s.(x,t)=(x,t+s)$. This form of $\cR$ implies that $\zh$ can be locally written as
$$\zh=\xd t+f_a(x,t)\,\xd x^a.$$
Hence,
$$\xd\zh=\frac{\pa f_a}{\pa t}(x,t)\,\xd t\we\xd x^a
+\frac{\pa f_a}{\pa x^b}(x,t)\,\xd x^b\we\xd x^a,$$
and because $i_\cR\xd\zh=0$, we get $\frac{\pa f_a}{\pa t}(x,t)=0$ for all $a$. It follows that the functions $f_a$ do not depend on $t$, $f_a(x,t)=f_a(x)$, so $\zh=\xd t+\zvy_U$, where $\zvy_U=f_a(x)\xd x^a$ is the pull-back of a 1-form on $U$. Since
\be\label{symp}\xd\zh=\xd\zvy_U=\frac{\pa f_a}{\pa x^b}(x)\,\xd x^b\we\xd x^a\ee
is of rank $2n$, it is the pull-back of a uniquely determined symplectic form $\zw_U$ on $U$ which
in coordinates $(x^a)$ looks exactly like (\ref{symp}). Being uniquely determined by $\xd\zh$, the symplectic forms $\zw_U$ agree on the intersections $U_1\cap U_2$, so that there is a symplectic form $\zw$ on $N$ such that $\xd\zh=p^*(\zw)$. In other words, $(M,\zh)$ is a contactification of $(N,\zw)$ on which the $\R$-action generated by the flow of $\cR$ defines an $\R$-principal bundle structure. Moreover, $\zw$ represents the curvature of the principal connection $\zh$.

The fiber bundle $p:M\to N$ is clearly trivializable, as the fibers are contractible. Using a global section $\zs:N\to M$ to identify $N$ with a submanifold $\zs(N)$ in $M$, we can view $M\simeq N\ti\R$ as a trivial $\R$-principal bundle over $N$. Let $\zh_\zs$ be the contact form $\zh$ reduced to the horizontal submanifold $\zs(N)$. By the identification $N\simeq \zs(N)$ given by the section $\zs$ (or the projection $p$) we can view $\zh_\zs$ as a 1-form on $N$. Since the pull-back by $p$ of $\xd\zh_\zs$ is $\xd\zh$, we have $\xd\zh_\zs=\zw$, so the symplectic form $\zw$ is exact. Therefore, it is easy to see that $(M,\zh)$ is the standard contactification of the symplectic form $\zw=\xd\zh_\zs$ described in Example \ref{ex2}.

\subsection{There exists a compact orbit}
For a result being a variant of the celebrated \emph{Reeb local stability theorem} and describing the behavior of smooth fibrations near a compact fiber, we refer to Meigniez \cite[Lemma 22]{Meigniez:2002}. It simply says that, for a smooth fibration, every compact subset of every fibre has a product neighborhood. In our situation, this immediately implies that, if $x$ is a point in $N$ for which the orbit $\cO_x=p^{-1}(x)$ is compact, then there is a (connected) neighbourhood $U\subset N$ of $x$ such that $p$ is a fiber bundle when restricted to $p^{-1}(U)$. In other words, any compact orbit has a neighbourhood $M_U=p^{-1}(U)$ in which $p$ is a trivializable fiber bundle, $M_U\simeq U\ti S^1$, with the typical fiber $S^1$.

\mn It follows now from Proposition \ref{pmain} that in the open submanifold $M_U\subset M$ the Reeb vector field induces an $S^1$-principal action. Let us fix such $U$ and denote the corresponding period $\zr$. Let $$M_{\zr}=\{x\in N:\,\zr(x)=\zr\}$$
be the set of points of $N$ for which $\cO_x$ is a compact orbit with the minimal period $\zr$.
It is clear from what has been said that $M_{\zr}$ is open. We will show that it is also closed.

\mn Indeed, let $x_0\in N$ belong to the closure of $M_{\zr}$ and $y_0\in\cO_{x_0}$. The submersion $p:M\to N$ is an open map, so in a neighbourhood of $y_0$ there is a sequence of points $(y_n)$ such that $y_n\rightarrow y_0$ and $p(y_n)=x_n\in M_{\zr}$. Since $\cR$ is complete, its flow $\zf_t$ is globally defined for all $t\in\R$. We have then
$$y_n=\zf_\zr(y_n)\rightarrow\zf_\zr(y_0).$$
Hence, $\zf_\zr(y_0)=y_0$, so $\cO_{x_0}$ is also a periodic orbit with period $\zr$.
This is, in fact, the minimal period for $\cO_{x_0}$, since the minimal period (Proposition \ref{pmain}) is locally constant on periodic orbits.
For connected $N$ all this implies that, if there is one periodic orbit of $\cR$ with the minimal period $\zr$, then all orbits are periodic with the same minimal period $\zr$.

\subsection{The main result}
Summing up all our observations in the preceding sections, we can formulate the following general result.
\begin{theorem}
Let $(M,\zh)$ be a regular connected and complete contact manifold, and let $p:M\to N=M/\cR$ be the corresponding smooth fibration. Then, either the global flow of the corresponding Reeb vector field $\cR$ induces a free $S^1$-action with the minimal period $\zr$ that turns $p$ into an $S^1$-principal bundle, or it turns $p$ into an $\R$-principal bundle, which is clearly trivializable, $M\simeq N\ti\R$.
In both cases the contact form $\zh$ represents a principal connection of the principal bundle $p:M\to N$ and $(M,\zh)$ is a contactification of a uniquely determined symplectic structure $\zw$ on $N$, i.e., $p^*(\zw)=\xd\zh$.

\mn Moreover, in the case of the $S^1$-principal bundle, the symplectic form $\zw$ on $N$ is $\Z_\zr$-integral, where $\Z_\zr=\zr\cdot\Z$, i.e., $\big[\zw/\zr\big]\in H^2(N,\Z)$.
In the case of the $\R$-principal bundle, in turn, the symplectic form $\zw$ is exact, $\zw=\xd\zvy$, for a 1-form $\zvy$ and $(M\simeq N\ti\R,\zh)$ is the standard contactification of an exact symplectic manifold: $\zh(x,t)=\zvy(x)+\xd t$.
\end{theorem}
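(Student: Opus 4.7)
The plan is to dichotomize according to whether the Reeb vector field $\cR$ admits any compact (periodic) orbit, and to show that the two cases lead respectively to the $\R$-principal and $S^1$-principal bundle structures asserted. Since most of the structural ingredients are already collected in Proposition~\ref{pmain} and in the discussion of Sections~5.1--5.2, the work amounts to organizing them and adding a connectedness argument in the mixed situation.

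\textbf{Case A: no orbit of $\cR$ is compact.} Every fiber of $p$ is then diffeomorphic to $\R$, the flow $(t,y)\mapsto\zf_t(y)$ is a free $\R$-action, and I would invoke Meigniez's criterion (\cite[Corollary~31]{Meigniez:2002}) to conclude that $p:M\to N$ is an $\R$-principal bundle with this action. Locally trivializing and parametrizing fibers by the flow yields coordinates $(x^a,t)$ with $\cR=\pa_t$, in which the Reeb equations force $\zh=\xd t+f_a(x)\,\xd x^a$ with $f_a$ independent of $t$. Then $\xd\zh$ descends to a uniquely determined symplectic form $\zw$ on $N$. Since the fiber $\R$ is contractible the bundle is trivializable; choosing a global section $\zs$ and setting $\zvy=\zs^*\zh$, one recognizes $(M,\zh)$ as the standard contactification $\zh(x,t)=\xd t+\zvy(x)$ of Example~\ref{ex2}, with $\zw=\xd\zvy$ exact.

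\textbf{Case B: some orbit $\cO_{x_0}$ is compact.} Here I would first apply a variant of Reeb's local stability theorem (\cite[Lemma~22]{Meigniez:2002}): a compact orbit has a product neighbourhood, so there is a connected open $U\subset N$ around $x_0$ on which $p^{-1}(U)\simeq U\ti S^1$ is a trivializable $S^1$-fiber bundle. Proposition~\ref{pmain} applied on $p^{-1}(U)$ then supplies a minimal period $\zr=2\pi\hbar$ that is constant on $U$. The crux is to propagate this to all of $N$: define
\[
M_\zr=\{\,x\in N:\cO_x\text{ is compact with minimal period }\zr\,\}.
\]
Openness of $M_\zr$ follows from reapplying the local stability step at every one of its points. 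For closedness, let $x_n\in M_\zr$ with $x_n\to x_0\in N$, and pick $y_0\in\cO_{x_0}$; since the submersion $p$ is open, one can choose $y_n\to y_0$ with $p(y_n)=x_n$, and then completeness of $\cR$ gives
\[
\zf_\zr(y_0)=\lim_n\zf_\zr(y_n)=\lim_n y_n=y_0,
\]
so $\cO_{x_0}$ is periodic with period $\zr$; local constancy of the minimal period from Proposition~\ref{pmain} shows $\zr$ remains minimal at $x_0$. Connectedness of $N$ then gives $M_\zr=N$, and a second application of Proposition~\ref{pmain} produces the $S^1$-principal bundle structure with $\Z_\hbar$-integral curvature form $\zw$.

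\textbf{Main obstacle.} The delicate point is the closedness of $M_\zr$: it is precisely here that the completeness hypothesis is indispensable, since the limit identity $\zf_\zr(y_0)=y_0$ requires $\zf_\zr$ to be globally defined. Without completeness one cannot exclude mixed behaviour of compact and non-compact orbits, as illustrated in the excerpt by removing a point from a periodic fiber of a regular compact contact manifold. Once Case~B is established globally, all remaining assertions about principal connections, the induced symplectic form, and integrality are direct consequences of Proposition~\ref{pmain} and Section~5.1.
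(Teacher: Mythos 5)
Your proposal is correct and follows essentially the same route as the paper: the same dichotomy on the existence of a compact orbit, the same use of Meigniez's results (Corollary~31 for the $\R$-fiber case and Lemma~22 for local stability near a compact fiber), and the same open-and-closed argument for $M_\zr$ with completeness entering exactly where you say it does, followed by Proposition~\ref{pmain}. Nothing essential is missing.
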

\no The following corollary about contactifications of symplectic manifolds is now obvious.
\begin{corollary}
Let $(N,\zw)$ be a compact connected symplectic manifold. Then for any contactification $(M,\zh)$ of $(N,\zw)$
with the complete Reeb vector field, the manifold $M$ is canonically an $S^1$-principal bundle (thus compact)
such that $\zh$ is invariant with respect to the $S^1$-action.
\end{corollary}
\section{Conclusions and outlook}
Our interests in the subject of this paper came from contact supergeometry \cite{Bruce:2017,Grabowski:2013}, contact mechanics \cite{Grabowska:2022,Grabowska:2022a}, and geometry of quantum states \cite{Grabowski:2005}.  In this paper, for a regular contact manifold $(M,\zh)$ we have shown that, under the assumption that the Reeb vector field $\cR$ is complete, the dynamics of $\cR$ is very rigid: either each orbit is compact and the flow of $\cR$ makes $M$ into an $S^1$-principal bundle, or all orbits are non-compact and the flow of $\cR$ makes $M$ into an $\R$-principal bundle. In both cases there is a unique symplectic form $\zw$ on the manifold $M/\cR$ of $\cR$-orbits such that $p^*(\zw)=\xd\zh$, where $p:M\to M/\cR$ is the canonical projection, so $(M,\zh)$ is a contactification of $(M/\cR,\zw)$. Moreover, $\zw$ satisfies an integrality condition of geometric quantization in the first case, and is an exact symplectic form in the other. Note that the corresponding $S^1$-principal bundles over $(M/\cR,\zw)$ are in a one-to-one correspondence with complex Hermitian line bundles over $M/\cR$, which makes a connection to geometric quantization and quantum physics.
We included also an example showing that even arbitrary small deformations of a contact form within its conformal class can result in a complete qualitative change of the dynamic of the corresponding Reeb vector field.

\mn Our results are more general than the ones announced in \cite{Boothby:1958} (e.g., we do not assume that $M$ is compact) and the way out of the problems concerning the reparametrization of $\cR$ is by showing that we actually do not need any reparametrization, because under completeness assumption all orbits of $\cR$ with one periodic orbit are automatically periodic with a common minimal period.

\mn A particularly interesting subject of further studies is explicit constructions of contactifications of coadjoint orbits of compact Lie groups by means of the well-known Marsden-Weinstein-Meyer symplectic reduction
\cite{Marsden:1982,Meyer:1973} or, more precisely, contact reductions in the spirit of \cite{Grabowska:2022a}.
Such contactifications are essentially known from quantum physics in the case of unitary groups. A celebrated example is the unit sphere in a Hilbert space being a contactification of the corresponding space of pure quantum states (complex projective space). This is related to another false statement we found in the literature, this time in paper \cite{Bravetti:2021}, that the Fubini-Study symplectic forms on complex projective spaces are exact, which is impossible for compact symplectic manifolds.

\section{Acknowledgements}
The authors thank Alan Weinstein for his clarifications concerning the Weinstein Conjecture.

\vskip.5cm
\noindent Katarzyna Grabowska\\\emph{Faculty of Physics,
University of Warsaw,}\\
{\small ul. Pasteura 5, 02-093 Warszawa, Poland} \\{\tt konieczn@fuw.edu.pl}\\
https://orcid.org/0000-0003-2805-1849\\

\noindent Janusz Grabowski\\\emph{Institute of Mathematics, Polish Academy of Sciences}\\{\small ul. \'Sniadeckich 8, 00-656 Warszawa,
Poland}\\{\tt jagrab@impan.pl}\\  https://orcid.org/0000-0001-8715-2370
\\

\begin{thebibliography}{V}

\bibitem{Arnold:1989} V.~I.~Arnold,
\newblock{Mathematical Methods of Classical Mechanics,}
\newblock{\emph{Graduate Texts in Mathematics} \textbf{60}, Springer-Verlag, New York, 1989.}

\bibitem{Bates:1997} S.~Bates, A.~Weinstein,
\newblock{Lectures on the geometry of quantization,}
\newblock{\emph{Berkeley Mathematics Lecture Notes} \textbf{8}. \textsl{American Mathematical Society, Providence, RI; Berkeley Center for Pure and Applied Mathematics, Berkeley, CA,} 1997.}

\bibitem{Boothby:1958} W.~M.~Boothby, H.~C.~Wang,
\newblock{On contact manifolds,}
\newblock{\emph{Ann. of Math.} \textbf{68} (1958), 721--734.}

\bibitem{Bravetti:2021} H.~Cruz-Prado, A.~Bravetti, A.~Garcia-Chung,
\newblock{From geometry to coherent dissipative dynamics in quantum mechanics,}
\newblock{\emph{Quantum Rep.} \textbf{3} (2021), 664--683.}

\bibitem{Bruce:2017} A.~J.~Bruce, K.~Grabowska, J.~Grabowski,
\newblock{Remarks on contact and Jacobi geometry,}
\newblock{\emph{SIGMA Symmetry Integrability Geom. Methods Appl.} \textbf{13} (2017), Paper No. 059, 22 pp.}

\bibitem{Ehresmann:1951} C.~Ehresmann,
\newblock{Les connexions infinit\'esimales dans un espace fibr\'e diff\'erentiable (French),} \newblock{\textsl{Colloque de topologie (espaces fibr\'es), Bruxelles, 1950}, pp. 29--55. Georges Thone, Li\`ege; Masson \& Cie, Paris, 1951.}

\bibitem{Geiges:2008} H.~Geiges,
\newblock{An introduction to contact topology,}
\newblock{\emph{Cambridge Stud. Adv. Math.} \textbf{109},
Cambridge University Press, Cambridge, 2008.}

\bibitem{Grabowska:2022} K.~Grabowska, J.~Grabowski,
\newblock{A novel approach to contact Hamiltonians and contact Hamilton-Jacobi Theory.}
\newblock{\emph{J. Phys. A} \textbf{55} (2022), 435204 (34pp).}

\bibitem{Grabowska:2022a} K.~Grabowska, J.~Grabowski,
\newblock{Reductions: precontact versus presymplectic,}
\newblock{\emph{Ann. Mat. Pura Appl.} (2023) (online first).}

\bibitem{Grabowski:2004} J.~Grabowski, D.~Iglesias, J.~C.~Marrero, E.~Padr\'on, P.~Urba\'nski,
\newblock{Poisson-Jacobi reduction of homogeneous tensors,}
\newblock{\emph{J. Phys. A} \textbf{37} (2004), 5383--5399.}

\bibitem{Grabowski:2013} J.~Grabowski,
\newblock{Graded contact manifolds and contact Courant algebroids}
\newblock{\emph{J. Geom. Phys. } \textbf{68} (2013), 27--58.}

\bibitem{Grabowski:2005} J.~Grabowski, M.~Ku\'s, G.~Marmo,
\newblock{Geometry of quantum systems: density states and entanglement,}
\newblock{\emph{J. Phys. A} \textbf{38} (2005), 10217--10244.}


\bibitem{Hofer:1993} H.~Hofer,
\newblock{Pseudoholomorphic curves in symplectizations with applications to the Weinstein conjecture in dimension three,}
\newblock{\emph{Invent. Math.} \textbf{114} (1993), 515--563.}

\bibitem{Kegel:2021} M.~Kegel, Ch.~Lange,
\newblock{A Boothby-Wang theorem for Besse contact manifolds,}
\newblock{\emph{Arnold Math. J.} \textbf{7} (2021), 225--241.}

\bibitem{Niederkruger:2005} K.~Niederkr\"uger,
\newblock{Compact Lie group actions on contact manifolds,}
\newblock{Inaugural-Dissertation, Universit\"at zu K\"oln (2005).}

\bibitem{Marsden:1982} J.~E.~Marsden, A.~Weinstein, T.~Ratiu, R.~Schmid, R.~G.~Spencer,
\newblock{Hamiltonian systems with symmetry, coadjoint orbits and plasma physics,}
\newblock{Proceedings of the IUTAM-ISIMM symposium on modern developments in analytical mechanics, Vol. I (Torino, 1982), \emph{Atti Accad. Sci. Torino Cl. Sci. Fis. Mat. Natur.} \textbf{117} (1983), suppl. 1, 289--340.}

\bibitem{Meigniez:2002} G.~Meigniez,
\newblock{Submersions, fibrations and bundles,}
\newblock{\emph{Trans. Amer. Math. Soc.} \textbf{354} (2002), 3771--3787.}

\bibitem{Meyer:1973} K.~R.~Meyer,
\newblock{Symmetries and integrals in mechanics,}
\newblock{\emph{Dynamical systems} (Proc. Sympos., Univ. Bahia, Salvador, 1971), pp. 259--272. Academic Press, New York, 1973.}

\bibitem{Pasquotto:2012} F.~Pasquotto,
\newblock{A short history of the Weinstein conjecture,}
\newblock{\emph{Jahresber. Dtsch. Math.-Ver.} \textbf{114} (2012), 119--130.}


\bibitem{Taubes:2007} C.~H.~Taubes,
\newblock{The Seiberg-Witten equations and the Weinstein conjecture,}
\newblock{\emph{Geom. Topol.} \textbf{11} (2007), 2117--2202.}


\bibitem{Weinstein:1979} A.~Weinstein,
\newblock{On the hypotheses of Rabinowitz' periodic orbit theorems,}
\newblock{\emph{J. Differential Equations} \textbf{33} (1979), 353--358.}

\end{thebibliography}
\end{document}